\pdfoutput=1 
\documentclass[11pt]{article}
\usepackage{amssymb, latexsym, amsmath, theorem, graphics}
\usepackage[russian]{babel}
\usepackage[cp1251]{inputenc}
\pagestyle{myheadings} 
                                                                                
\setlength{\textwidth}{15.5cm}
\setlength{\textheight}{22cm}
\setlength{\oddsidemargin}{1cm}
\setlength{\evensidemargin}{1cm}
\setlength{\headheight}{12pt}
\setlength{\headsep}{20pt}
\setlength{\topmargin}{0cm}
                                                                            
\numberwithin{equation}{section}
                                                                           
\theoremstyle{plain}
\theorembodyfont{\itshape}
\newtheorem{theorem}{Теорема}[section]
\newtheorem{proposition}[theorem]{Предложение}
\newtheorem{lemma}[theorem]{Лемма}
\newtheorem{corollary}[theorem]{Следствие}
                                                                                
\theorembodyfont{\rmfamily}
\newtheorem{definition}[theorem]{Определение}

\newtheorem{remark}[theorem]{Замечание}

\newenvironment{proof}{{\noindent \textbf{Доказательство}\,\,}}
{\hspace*{\fill}$\Box$\medskip}

\title{О квантовании перемычек в уравнении, моделирующем эффект 
 Джозефсона}

\author{А.~А.~Глуцюк 
\thanks{Laboratoire
J.-V.Poncelet (UMI 2615 du CNRS и Независимый московский университет)} 
\thanks{Национальный исследовательский университет Высшая школа экономики.}  
\thanks{Постоянный адрес: CNRS, Unit\'e de Math\'ematiques
Pures et Appliqu\'ees, M.R., \'Ecole Normale Sup\'erieure de Lyon,
46 all\'ee d'Italie, 69364 Lyon 07, France. Email: 
aglutsyu@ens-lyon.fr}, \ 
В.~А.~Клепцын$^*$\thanks{Постоянный адрес:  
CNRS,  Institut de Recherche Math\'ematique de Rennes 
(UMR 6625 CNRS), Campus de Beaulieu, b\^at. 23, 263 Avenue du G\'en\'eral 
Leclerc, CS 74205, 35042 Rennes Cedex, France. Email: victor.kleptsyn@univ-rennes1.fr}
\ Д.~А.~Филимонов \thanks{Московский физико--технический институт}
\thanks{Московский государственный университет путей сообщения. Email: mityafil@gmail.com}
\ И.~В.~Щуров\thanks{Национальный исследовательский университет Высшая школа экономики. Email: ilya@schurov.com}\ \footnotemark[3]\ \thanks{В данной статье использованы результаты, полученные в ходе выполнения проекта №11-01-0239 <<Инвариантные многообразия и асимптотическое поведение быстро-медленных отображений>> в рамках программы «Научный фонд НИУ ВШЭ» в 2012/2013 гг}
\thanks{Исследования всех авторов частично поддержаны совместным грантом РФФИ-CNRS 10-01-93115 НЦНИЛ\_a и грантом РФФИ 10-01-00739-а. 
Исследования первого автора (А.А.Глуцюка) частично поддержаны 
французским грантом ANR-08-JCJC-0130-01}
}
\begin{document}
\maketitle

\begin{abstract} В статье исследуется двупараметрическое семейство неавтономных
	обыкновенных дифференциальных уравнений на торе, моделирующее эффект
	Джозефсона из физики сверхпроводников.  
Изучается его число вращения как
	функция от параметров и {\it языки Арнольда} (иначе называемые {\it областями фазового захвата}): 
	множества уровня числа вращения, имеющие непустую
	внутренность. Языки рассматриваемого уравнения обладают рядом нетипичных
	свойств: фазовый захват происходит только для целочисленных значений числа
	вращения~\cite{buch2,IRF}; границы языков задаются аналитическими
	кривыми~\cite{buch2004, buch1}, в точках пересечения которых ширина языка равна нулю (образуются перемычки). Численные эксперименты и теоретические
	исследования  (\cite{buch2006}, \cite{RK}) 
	показывают, что каждый язык Арнольда образует
	бесконечную цепочку примыкающих друг к другу областей, разделенных перемычками и уходящих на
	бесконечность в асимптотически вертикальном направлении. Недавно в ходе численных экспериментов было 
	также обнаружено, что для каждого языка Арнольда все его
	перемычки ложатся на одну и ту же вертикальную прямую с целочисленной
	абсциссой, равной соответствующему числу вращения. В статье приведено
	доказательство этого факта для некоторого открытого множества рассматриваемых двупараметрических семейств уравнений.  В общем случае 
	доказано более слабое 
 утверждение: абсцисса каждой перемычки целочисленна, имеет тот же знак, 
 что и число вращения, и по модулю не превосходит 
 числа вращения. 
Доказательство  основано на представлении рассматриваемых дифференциальных уравнений как проективизаций линейных дифференциальных уравнений на сфере Римана~\cite{buch2004,IRF,Foote} и
	классической теории линейных уравнений с комплексным временем.
\end{abstract}

\def\td{\mathbb T^2}
\def\rr{\mathbb R}
\def\zz{\mathbb Z}
\def\cc{\mathbb C}
\def\oc{\overline\cc}
\def\diag{\operatorname{diag}}

\section{Введение}
\subsection{Основные результаты}
В статье рассматривается  семейство 
обыкновенных  дифференциальных 
уравнений на торе $\td=S^1\times S^1$, 
$S^1=\rr\slash2\pi\zz$, с координатами $(x,t)$, имеющее вид 

\begin{equation}\dot x=\frac{dx}{dt}=\nu\sin x + a + s \sin t, \ a,\nu,s\in\rr, \ \nu\neq0.\label{jos}\end{equation}
Это семейство уравнений, которые мы будет для краткости называть 
{\it уравнениями класса Д,} 
моделирует эффект Джозефсона из физики сверхпроводников. 
Значение параметра $\nu$ фиксируется произвольным, не равным нулю. 

Отображение потока уравнения за период $2\pi$ есть диффеоморфизм пространственной окружности $S^1=S^1\times\{0\}$ 
$$h_{a,s}\colon S^1\to S^1.$$
В настоящей статье исследуется его число вращения $\rho=\rho(a,s)$ как функция 
от параметров $a$ и $s$. (Соглашение о нормировке: 
 число вращения поворота равно углу поворота, 
деленному на $2\pi$.) Будем называть координатную ось $a$ 
горизонтальной (и $a$-координату~--- {\it абсциссой}), а 
ось $s$~--- вертикальной (и $s$-координату~--- {\it ординатой}).
\begin{definition} 
Назовём {\it $r$-м языком Арнольда} множество уровня 
$\{(a,s)\mid \rho(a,s)=r\}\subset\rr^2$, если оно имеет непустую внутренность. 
\end{definition}
 
Число вращения системы~\eqref{jos} имеет физическую интерпретацию: это среднее напряжение за длительный промежуток времени. Отрезки, по которым языки Арнольда пересекают горизонтальные прямые, соответствуют ступенькам Шапиро на вольт-амперной характеристике. Ранее было показано, что
\begin{itemize}
	\item языки Арнольда существуют только для целых значений числа вращения~\cite{buch2,IRF,LSh2009};
	\item граница каждого языка $\rho=r$ состоит из двух аналитических кривых, 
		являющихся графиками функций, обозначаемых через $a=g^-_r(s)$ и $a=g^+_r(s)$ (см.~\cite{buch1}; А. В. Клименко независимо заметил, что этот факт мгновенно следует из соображений симметрии\footnote{Уравнение 
		класса Д имеет симметрию $(x,t)\mapsto (\pi-x,\pi-t)$, в силу которой параболическими неподвижными точками  отображения потока за период могут быть только неподвижные точки $\pm\frac{\pi}2$ симметрии, см. \cite{RK}. Доказательство эквивалентного утверждения 
		содержится в \cite[стр. 30]{gi}}  уравнения 
		класса Д, см.~\cite{RK}); 
	\item каждая функция $g^-_r(s)$, $g^+_r(s)$ имеет асимптотику $r$-й функции Бесселя 
		на бесконечности (численно обнаружено и обосновано на физическом уровне строгости в ~\cite{buch2006}, доказано в 
		 ~\cite{RK}): 
\begin{equation}
\begin{cases}	
& g^-_r(s)=r-\nu J_{r}(-s/\nu)+o(s^{-1/2})\\
&	g^+_r(s)=r+\nu J_r(-s/\nu)+o(s^{-1/2})
\end{cases} 
\label{bessel}
\end{equation}

\item тем самым, каждый язык Арнольда есть бесконечная 
цепочка примыкающих друг к другу ограниченных областей, 
уходящих на бесконечность в асимптотически вертикальном направлении;  
точки примыкания соседних областей,  не лежащие на оси $s=0$,  называются 
{\it перемычками.}
\end{itemize}

В ходе численных экспериментов был обнаружен следующий 

\medskip

{\bf Экспериментальный факт А.} {\it Для любого 
фиксированного $\nu\neq0$ и любого $r\in\mathbb Z$ 
все перемычки  $r$- го языка Арнольда лежат 
на одной и той же вертикальной прямой $a=r$.}
\medskip

Основным результатом настоящей статьи является следующая теорема, которая 
частично доказывает вышеупомянутый экспериментальный факт. 

\begin{theorem}\label{th} Экспериментальный факт А верен 
при любом фиксированном $\nu\neq0$  с $|\nu|\leq 1$. 
При любом фиксированном $\nu\neq0$ 
все перемычки имеют  целочисленные абсциссы. У каждой перемычки 
абсцисса   имеет тот же знак, что и соответствующее число 
вращения, и не превосходит числа вращения по модулю. Перемычки, 
отвечающие нулевому числу вращения, суть в точности перемычки, 
лежащие на оси $a=0$.  
\end{theorem}

\begin{corollary} \label{slkr} Для любых $\nu\neq0$ и $r\in\mathbb Z$ существует  
  $M=M(\nu,r)>0$, такое что все перемычки $r$- го 
языка Арнольда с ординатами, по модулю большими $M$, лежат на 
прямой $a=r$.
\end{corollary}

Следствие \ref{slkr} вытекает из целочисленности абсцисс перемычек (теорема 
\ref{th}) и асимптотической формулы (\ref{bessel}) для границы языка Арнольда, 
в силу которой точки $r$-го языка с достаточно большими ординатами по модулю 
лежат в 1-окрестности прямой $a=r$.  

\begin{remark} \label{rperem} 
Известно, что при любом $r\in\mathbb Z\setminus0$ язык Арнольда $\rho(a,s)=r$ 
пересекает горизонтальную ось $s=0$ ровно в одной точке 
с абсциссой $\sqrt{r^2+\nu^2}$ (см.~\cite{IRF} и \cite[следствие 3]{buch1}).  
Это --- точка примыкания соседних компонент  
внутренности языка, которую мы будем называть {\it ложной перемычкой.} 
\end{remark}

\subsection{Идея доказательства и план статьи}

Доказательство  теоремы \ref{th} представлено в разделе 3. 
Оно основано на представлении семейства уравнений класса Д как 
семейства проективизаций линейных дифференциальных уравнений на 
сфере Римана (полученном в работах различных авторов, см. также раздел 
2.2 ниже) 
и классической теории линейных уравнений с комплексным временем.

 \begin{definition} Будем называть невырожденный линейный оператор 
 в линейном пространстве 
 {\it проективно тождественным,} если он имеет тождественную проективизацию 
 как индуцированное отображение проективного пространства. 
 \end{definition}
 
 Целочисленность абсцисс перемычек при любом $\nu\neq0$ доказывается в 
 разделе 3.1. 
Линейные уравнения, соответствующие уравнениям класса Д, имеют 
две нерезонансные иррегулярные особые точки 0 и $\infty$ ранга Пуанкаре 1 
 на сфере Римана (см. раздел 2.2). 
Перемычки отвечают тем значениям параметров, при которых 
монодромия линейного уравнения проективно тождественна. 
Оказывается, что это имеет место 
тогда и только тогда, когда росток рассматриваемого линейного уравнения в 
иррегулярной особой точке 0 приводится 
аналитической заменой переменных к его формальной нормальной форме 
(прямой сумме одномерных уравнений), и монодромия последней 
проективно тождественна. Это выводится из 
классических результатов об аналитической классификации 
ростков линейных уравнений в нерезонансных иррегулярных особых точках. 
Матрица вычета нормальной формы имеет единственное ненулевое 
собственное значение, равное абсциссе перемычки.  
Отсюда следует целочисленность абсциссы перемычки. 

Экспериментальный факт А  при $|\nu|\leq1$ доказывается в разделе 3.2. 
Дополнительное элементарное дифференциальное неравенство 
(уточнение леммы 4 из \cite{buch1}) 
 показывает, что при $|\nu|\leq1$ дополнение $r$-го языка Арнольда до горизонтальной оси 
 $s=0$ лежит строго между прямыми $a=r\pm1$. Тем самым, 
все его  перемычки должны лежать на прямой $a=r$. 

Общий случай, когда $\nu$ --- 
произвольно, разбирается в разделе 3.3. Легко 
показать, что абсцисса перемычки имеет тот же знак, что и соответствующее 
число вращения. Дополнительные рассуждения о соответствующих 
уравнениях Риккати, использующие  принцип аргумента для комплексных 
решений, доказывают, что модуль абсциссы каждой перемычки не превосходит 
модуля числа вращения. Это докажет теорему. 

Подготовительный материал (число вращения, линейные уравнения, 
операторы Стокса и монодромия) содержится в разделе 2. 

\section{Предварительные сведения}
\subsection{Число вращения потока на торе и перемычки.}

Рассмотрим поток на торе $\td=S^1\times S^1=\rr^2\slash2\pi\zz^2$ 
с координатами $(x,t)$, 
заданный неавтономным дифференциальным уравнением с гладкой правой 
частью
\begin{equation}\dot x=\frac{dx}{dt}=f(x,t).\label{potok}\end{equation}
Преобразование потока за время $t$ есть диффеоморфизм пространственной 
окружности, который мы обозначим $h_t:S^1\to S^1$. 
Рассмотрим универсальную  накрывающую над пространственной окружностью: 
$$\rr\to S^1=\rr\slash2\pi\zz.$$
Преобразования потока уравнения (\ref{potok}) поднимаются 
на универсальную накрывающую и индуцируют диффеоморфизмы
$$H_{q,t}:\rr=\rr\times\{ q\}\to\rr=\rr\times \{ q+t\}, \ H_{q,0}=Id.$$
Напомним, что для любых $(x,q)\in\rr\times S^1$  предел 
\begin{equation}
\rho=\lim_{n\to+\infty}\frac1nH_{q,2\pi n}(x)\in\rr\label{rot}\end{equation}
существует, не зависит ни от $q$, ни от $x$  и называется 
{\it числом вращения потока уравнения (\ref{potok})} (см. напр.~\cite[стр. 124]{Arn}).

Рассмотрим теперь (произвольное) аналитическое семейство уравнений 
\begin{equation}v_{a,s}: \ \dot x=g(x,t,s)+a, \ a,s\in\rr.\label{sem-a}\end{equation}

\begin{proposition} \label{monot} Число вращения $\rho=\rho(a,s)$ потока (\ref{sem-a}) 
есть непрерывная функция от параметров $(a,s)$, которая не убывает 
как  функция от $a$. Если при некотором 
значении параметров $(a_0,s_0)$  преобразование потока 
$h_{2\pi}=h_{a_0,s_0,2\pi}:S^1\times\{0\}\to S^1\times \{0\}$ уравнения (\ref{sem-a}) 
имеет неподвижную точку, то число 
вращения $\rho(a_0,s_0)$ целочисленно. Если в последнем случае, дополнительно,  преобразование потока  $h_{2\pi}$ 
не является тождественным, то существует такой интервал 
$I$ с концом $a_0$, что 

- для любого $a\in I$ преобразование $h_{a,s_0,2\pi}$ 
имеет хотя бы одну  неподвижную точку; 

- для любого $a\in I$  потоки всех 
векторных полей, достаточно $C^1$- близких к $v_{a,s_0}$, имеют то же число 
вращения $\rho(a_0,s_0)$. 
\end{proposition}

\begin{proof} Первое и второе утверждения предложения 
содержатся в~\cite[стр. 131--133]{Arn}. Докажем третье утверждение предложения. 
Пусть преобразование потока $h=h_{a_0,s_0,2\pi}$ не является тождественным и 
имеет неподвижную точку $O\in S^1$. Тогда последняя --- изолированная 
неподвижная точка, в силу аналитичности. Пусть  
 $J=[P,O]\subset S^1$ --- отрезок с концом $O$ слева от $O$, не содержащий других 
 неподвижных точек. Тогда $h(J)\neq J$: либо $h(J)\subset J$, либо $h(J)\supset J$. 
 Без ограничения общности мы рассмотрим только первый случай: 
 $h_{a,s_0,2\pi}(J)\subset J$ при $a=a_0$. 
Существует интервал $I=(c,a_0)\subset\mathbb R$, такой что 
последнее включение выполнено для любого $a\in I$ и является 
строгим, $h_{a,s_0,2\pi}(J)\Subset J$, так как для любых 
фиксированных $s$, $t$ и $x$ функция $a\mapsto h_{a,s,t}(x)$ 
 непрерывна и строго возрастает. Тем самым, для любого $a\in I$ отображение 
 $h_{a,s_0,2\pi}$ переводит отрезок $J$ внутрь себя, а значит, имеет там неподвижную 
 точку, и следовательно, целое число вращения, равное $\rho(a_0,s_0)$. 
 Последнее включение выживает при любых $C^0$- малых возмущениях 
 отображения $h_{a,s,2\pi}$.  
 Это вместе с непрерывностью числа вращения доказывает предложение \ref{monot}.
 \end{proof}

\def\wt#1{\widetilde#1}

Рассмотрим теперь семейство (\ref{jos}) уравнений класса Д. 
Для каждых значений параметров $(a,s)$ обозначим 
через 
\begin{equation}h_{a,s}=h_{a,s,2\pi}:S^1\times \{0\}\to S^1\times \{0\} 
\label{holonomy}\end{equation}
преобразование потока соответствующего уравнения за период $2\pi$. Пусть  
 \begin{equation}\rho(a,s) \text{  обозначает соответствующее число вращения.}
 \label{rho-as}\end{equation}

\begin{proposition} \label{perem-id} Число вращения $\rho(a,s)$ есть непрерывная функция от параметров ($a,s)$, 
которая не убывает как функция от $a$. Перемычки языков Арнольда 
и ложные перемычки 
(см. введение)  отвечают в точности тем значениям параметров, при которых 
соответствующее преобразование потока $h_{a,s}$ тождественно.
\end{proposition}
\begin{proof}
Первое утверждение предложения \ref{perem-id} следует 
из первого утверждения предложения \ref{monot}. 
Докажем второе утверждение. 
Предположим противное: для одной из перемычек $(a_0,s_0)$ 
(настоящей или ложной) 
 соответствующее 
преобразование потока не является тождественным. Оно имеет целочисленное 
число вращения и тем самым, неподвижную точку. Следовательно, имеется 
интервал $I\subset\rr$, примыкающий к $a_0$, такой что для любого $a\in I$ 
преобразование $h_{a,s_0}$ имеет  неподвижную точку 
и интервал $I\times\{ s_0\}\subset\{ s=s_0\}$ 
 лежит во внутренности языка Арнольда 
$\rho(a,s)=\rho(a_0,s_0)$ (предложение \ref{monot}).  В то же время,  прямая  
$s=s_0$ не может пересекать внутренности языка. Действительно, 
в противном случае, близкая  ей 
параллельная  прямая пересекла бы, как минимум, 
две различные компоненты языка (по определению перемычки), что 
противоречило бы монотонности числа вращения как функции от $a$. 
Полученное противоречие доказывает тождественность преобразования потока. 
И обратно, пусть преобразование потока тождественно. Тогда рассматриваемая 
точка $(a_0,s_0)$ принадлежит целочисленному языку Арнольда. Она не может 
лежать в его внутренности. Действительно, при увеличении (уменьшении) 
параметра  $a$ при фиксированном $s=s_0$  образы всех точек $x\in S^1$ 
под действием преобразования потока движутся в  
положительную (отрицательную) сторону. В частности, у возмущенного 
отображения потока не будет неподвижных точек (в силу тождественности 
невозмущенного). Тем самым, точка $(a_0,s_0)$ горизонтальной прямой 
$\rr\times \{ s_0\}$ является изолированной точкой пересечения этой прямой с 
языком Арнольда $\rho=\rho(a_0,s_0)$. Следовательно, она должна быть 
перемычкой (настоящей или ложной). Предложение доказано. 
\end{proof}

\subsection{Сведение уравнений класса Д к уравнениям Риккати}
Результаты этого раздела в несколько иных терминах ранее были получены в работах~\cite{buch2004,IRF,Foote}.

\begin{proposition} \label{prop-change} Замена переменных
\begin{equation}p=e^{ix}, \ \tau=e^{it}\label{change1}\end{equation}
сводит семейство уравнений класса Д к семейству уравнений 
вида Риккати:
\begin{equation}\frac{dp}{d\tau}=\frac1{\tau^2}\left(\nu(1-p^2)\frac{i\tau}2+(a\tau+\frac{is(1-\tau^2)}2)p\right).\label{ricc}\end{equation}
Последнее получается из следующего семейства линейных обыкновенных 
дифференциальных уравнений проективизацией: 
\begin{equation}\dot z=\frac{A(\tau)}{\tau^2}z, \ z=(z_1,z_2)\in\cc^2,  \ 
A(\tau)= \left(\begin{matrix}& 0 & \frac{i\nu\tau}2 \\
& \frac{i\nu\tau}2 & \frac{is}2(1-\tau^2)+a\tau\end{matrix}\right); \ \ p=\frac{z_2}{z_1}.
\label{lin}\end{equation}
\end{proposition}
\begin{proof} Подставляя замену (\ref{change1}) в уравнение (\ref{jos}), получаем 
$$\sin x =\frac1{2i}(p-p^{-1}), \ \sin t=\frac1{2i}(\tau-\tau^{-1}),$$
$$\dot p=\frac{dp}{dt}=ip\dot x=\frac{\nu p}2(p-p^{-1})+iap+\frac s2(\tau-\tau^{-1})p, \ 
\dot\tau=\frac{d\tau}{dt}=i\tau.$$
Следовательно,
$$ \frac{dp}{d\tau}=\frac{\dot p}{\dot\tau}=\frac1{\tau^2}(\frac{i\nu\tau}2(1-p^2)+
ap\tau+\frac{is}2(1-\tau^2)p),$$
и тем самым, получаем  уравнение (\ref{ricc}). Последнее есть уравнение вида 
Риккати, и тем самым, является проективизацией линейного уравнения 
\begin{equation}\dot z=\frac{dz}{d\tau}=\frac{B(\tau)}{\tau^2}z, \ z=(z_1,z_2)\in\cc^2 , \ 
p=\frac{z_2}{z_1}.\label{lineopr}\end{equation}
Уравнение (\ref{lineopr}) однозначно определено с точностью до нормировки, 
так как соответствующая вектор-функция $z(\tau)$ определяется 
решением $p(\tau)$ уравнения Риккати однозначно с точностью до умножения 
на скалярную функцию от $\tau$. Найдем коэффициенты уравнения 
(\ref{lineopr}).  Подставляя его в формулу $p=\frac{z_2}{z_1}$ и дифференцируя 
последнюю, получим 
$$\frac{dp}{d\tau}=\frac1{\tau^2}((B_{22}(\tau)-B_{11}(\tau))p+B_{21}(\tau)-B_{12}(\tau)p^2).$$
Сравнение последнего уравнения с уравнением (\ref{ricc}) дает 
$$B_{22}(\tau)-B_{11}(\tau)=a\tau+\frac{is}2(1-\tau^2), \ 
B_{21}(\tau)=B_{12}(\tau)=\frac{i\nu\tau}2.$$
Всякая матричнозначная функция $B=(B_{ij})(\tau)$, удовлетворяющая 
предыдущим равенствам, 
определяет линейное уравнение, отвечающее уравнению Риккати (\ref{ricc}). 
Функция $B_{11}(\tau)$ может быть выбрана произвольной, и её выбор  
определяет матрицу $B(\tau)$ однозначно. Положив $B_{11}\equiv0$, 
получим матричнозначную функцию $A(\tau)$ из (\ref{lin}). Предложение доказано. 
\end{proof}
\def\la{\lambda}
\subsection{Иррегулярные особые точки линейных дифференциальных уравнений:  
явление Стокса и монодромия}
Все результаты, представленные в  настоящем разделе, являются классическими и содержатся в  \cite{2, 12, bjl, jlp, sib}.

Рассмотрим росток линейного обыкновенного дифференциального уравнения 
\begin{equation}\dot z=\frac{B(\tau)}{\tau^2}z, \ z=(z_1,z_2)\in\cc^2\label{linirr}
\end{equation}
в окрестности нерезонансной иррегулярной особой точки $\tau=0$ ранга 1. 
По определению, это в точности означает, что  $B(\tau)$ есть голоморфная 
в нуле $2\times 2$ матричнозначная функция, и матрица $B(0)$ имеет 
различные собственные значения $\la_1$, $\la_2$. Без ограничения общности будем считать матрицу $B(0)$ диагональной, 
$$B(0)=\diag(\la_1,\la_2), \  \la_2-\la_1\in i\rr_+.$$
Этого можно добиться, применив линейные постоянные замены переменных  
$z$ и $\tau$. 

\begin{definition}  Два ростка уравнений вида ~\eqref{linirr} \emph{ аналитически
(формально) эквивалентны,} если существует замена $z=H(\tau)w$ переменной  
$z$, где $H(\tau)$ --- голоморфная обратимая матричнозначная функция
(соответственно, формальный обрaтимый степенной  ряд по $\tau$ с матричными 
коэффициентами), преобразующая одно уравнение в другое. 
\end{definition}
Аналитическая классификация иррегулярных нерезонансных ростков 
линейных уравнений  (\ref{linirr}) и результаты, представленные в настоящем 
разделе, были получены в классических работах 
\cite{bjl,jlp,sib} а  также представлены в \cite{2, 12}.
 Оказывается, что всякий росток уравнения (\ref{linirr}) 
формально эквивалентен единственной прямой сумме одномерных 
уравнений вида 
\begin{equation}\begin{cases} &\dot w_1=\frac{(b_{10}+b_{11}\tau)}{\tau^2}w_1\\
&\dot w_2=\frac{(b_{20}+b_{21}\tau)}{\tau^2}w_2.\end{cases}, \ \ b_{20}-b_{10}
\in i\rr_+, 
\label{linform}\end{equation}
называемой {\it формальной нормальной формой;} здесь $b_{j0}=\lambda_j$.  
Соответствующий нормализующий ряд  
$H(\tau)$, $z=H(\tau)w$, единственен с 
точностью до умножения справа на постоянную матрицу. Как правило, 
нормализующий ряд  расходится. В то же время, существует покрытие 
проколотой окрестности нуля двумя секторами $S_0$ и $S_1$ с вершиной в нуле 
 на комплексной прямой $\tau$, обладающее следующим свойством. 
 Над каждым сектором $S_j$ существует 
единственная замена переменных
$z=H_j(\tau)w$, которая преобразует \eqref{linirr} в
\eqref{linform}, где $H_j(\tau)$ --- аналитическая обратимая матричнозначная
функция на $S_j$,  $C^\infty$-гладко продолжающаяся на замыкание 
$\overline S_j$ сектора, так что ее асимптотический ряд Тейлора в
$0$ существует и совпадает с данным нормализующим рядом. Предыдущее утверждение о
существовании и единственности секториальной нормализации выполнено в
любом хорошем секторе (см. следующее определение); покрытие
состоит из хороших секторов. 

\begin{definition} \label{def:1.2}
Назовем сектор в $\mathbb C$ с вершиной в $0$ \emph{хорошим}, если он
содержит только одну вещественную полуось $\mathbb R_{\pm}$, и его
замыкание не содержит другую (см. рис.~\ref{fig:1}).
\end{definition}
 Будем считать, 
что сектор $S_0$ содержит вещественную положительную полуось, а сектор 
$S_1$ --- отрицательную, см. рис. \ref{fig:1}. Положим $S_2=S_0$. 

Стандартное разложение 
нормальной
формы \eqref{linform} в прямую сумму одномерных уравнений определяет
канонический базис в пространстве её решений (единственным образом,  
с точностью до умножения базисных функций на константы) с диагональной
фундаментальной матрицей. Обозначим последнюю фундаментальную матрицу
через 
$$W(\tau)=\diag(w_1,w_2).$$
Вместе с нормализующими заменами $H_j$ в $S_j$,  она определяет
канонические базисы $(f_{j1},f_{j2})$ в пространствах решений уравнения 
\eqref{linirr} в секторах $S_j$ с фундаментальными матрицами
\begin{equation}\label{eq:1.5}
Z^j(\tau)=H_j(\tau)W(\tau), \quad j=0,1,
\end{equation}
где для любого $j=0,1$ 
ветвь ``с индексом $j+1$'' фундаментальной матрицы $W(\tau)$ на $S_{j+1}$
получается из её ветви ``с индексом $j$'' на $S_j$ с помощью аналитического продолжения против часовой стрелки. Ветвь фундаментальной 
матрицы $W$ ``с индексом
$2$'' на $S_2=S_0$ получается из ветви ``с индексом $0$'' 
умножением 
справа на матрицу монодромии формальной нормальной формы 
\eqref{linform}. В связной компоненте пересечения $S_j\cap S_{j+1}$
имеются два канонических базиса решений, приходящих из секторов $S_j$ и
$S_{j+1}$. Как правило, они не совпадают. Переход между ними
определяется при помощи постоянной матрицы $C_j$: 
\begin{equation}\label{eq:1.6}
Z^{j+1}(\tau)=Z^j(\tau)C_j.
\end{equation}
Операторы переходов (матрицы $C_j$) называются \emph{операторами
(матрицами) Стокса}  (см. вышеупомянутую литературу). Нетривиальность операторов Стокса доставляет препятствие к 
аналитической эквивалентности уравнения \eqref{linirr} 
его формальной нормальной форме \eqref{linform} и называется 
{\it явлением Стокса.}

\begin{figure}[ht]
  \begin{center}
   \includegraphics{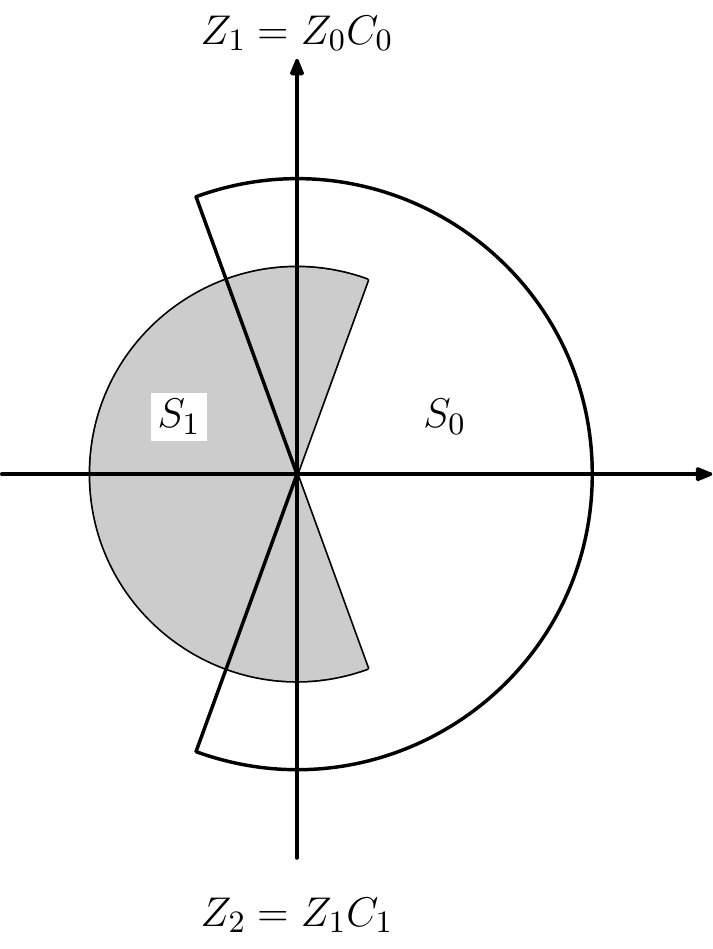}
    \caption{Сектора $S_0$, $S_1$ и операторы Стокса $C_0$ и $C_1$ над их 
    пересечениями}
    \label{fig:1}
  \end{center}
\end{figure}

\begin{remark} \label{rem:1.6}
Матрицы Стокса \eqref{eq:1.6} однозначно определены с точностью до
одновременного сопряжения с помощью одной и той же диагональной матрицы. 
{\it Матрицы Стокса унипотентны.} Матрица $C_0$, отвечающая верхней 
компоненте пересечения секторов, нижнетреугольна, а матрица  $C_1$ --- 
верхнетреугольна: 
\begin{equation}C_0=\left(\begin{matrix} & 1 & 0 \\
& c_0 & 1\end{matrix}\right), \ C_1=\left(\begin{matrix} & 1 & c_1 \\
& 0 & 1\end{matrix}\right).\label{triang}
\end{equation}
\end{remark}

\begin{theorem} \label{th-aneq} \cite{2, 12, bjl,jlp,sib}
Уравнение (\ref{linirr}) аналитически эквивалентно своей 
формальной нормальной форме (\ref{linform}), если и только если 
оно имеет тривиальные операторы Стокса. Два уравнения (\ref{linirr}) аналитически 
эквивалентны, если и только если они имеют одинаковые формальные нормальные формы и наборы матриц Стокса (с точностью до 
вышеупомянутого одновременного сопряжения с помощью диагональной 
матрицы). 
\end{theorem}

Пусть $\gamma:[0,1]\to\cc\setminus0$ --- замкнутый 
путь на проколотой прямой $\tau$, 
делающий один обход вокруг нуля против часовой стрелки. 
Напомним, что {\it оператор монодромии} уравнения (\ref{linirr}) --- это линейный 
оператор, действующий в пространстве локальных решений в окрестности 
точки $\gamma(0)$ и переводящий каждое решение в результат его 
аналитического продолжения вдоль пути $\gamma$.

\begin{lemma} \label{lemon} \cite[стр. 35]{12} Матрицы монодромии  
уравнения (\ref{linirr}) и его формальной нормальной формы (\ref{linform}), обозначаемые, соответственно, $M$ и $M_N$, и введенные выше матрицы 
Стокса $C_0$ и $C_1$ удовлетворяют равенству
\begin{equation}M=M_NC_1^{-1}C_0^{-1}.\label{monform}\end{equation}
\end{lemma}

Рассмотрим проективизацию уравнения (\ref{linirr}): 
соответствующее уравнение Риккати на $\oc$-значную функцию 
$p(\tau)=\frac{z_2(\tau)}{z_1(\tau)}$, 
получаемое из (\ref{linirr}) с помощью тавтологической проекции 
$\cc^2\setminus0\to\oc=\mathbb{CP}^1$. 
В конце статьи мы будем использовать следующие свойства 
проективизированных канонических базисных решений уравнения (\ref{linirr}).

\begin{proposition} \label{canon-proj} Пусть уравнение (\ref{linirr})  аналитически эквивалентно 
своей формальной нормальной форме. Тогда проективизации 
его канонических базисных решений суть единственные решения $p(\tau)$ 
соответствующего уравнения Риккати, голоморфные в окрестности нуля. 
Их значения в нуле равны, соответственно, нулю и бесконечности в проективной 
координате $p=\frac{z_2}{z_1}$.  
\end{proposition}

Предложение \ref{canon-proj} хорошо известно специалистам и следует из его 
справедливости для формальной нормальной формы.

\begin{proposition} \label{proj-norm} В условиях предложения \ref{canon-proj} пусть 
$\psi_1(\tau)$, $\psi_2(\tau)$ --- рассматриваемые канонические решения уравнения 
Риккати, $\psi_1(0)=0$, $\psi_2(0)=\infty$, а 
$$z=H(\tau)w, \  H(\tau)=\left(\begin{matrix}& h_{11} & h_{12} \\
& h_{21} & h_{22}\end{matrix}\right)(\tau)$$
 --- нормализующая 
замена переменных, приводящая линейное уравнение (\ref{linirr})  к нормальной форме. 
Тогда 
\begin{equation}
\psi_j=\frac{h_{2j}}{h_{1j}}, \ j=1,2.\label{psi-h}\end{equation}
\end{proposition}

Предложение \ref{proj-norm} следует из диагональности канонической 
фундаментальной матрицы решений нормальной формы. 

\section{Число вращения и монодромия: доказательство теоремы \ref{th}}

\subsection{Целочисленность перемычек}
Рассмотрим уравнения  (\ref{jos}) класса Д и соответствующие им  
уравнения Риккати (\ref{ricc}) и линейные уравнения (\ref{lin}). Преобразования потока $h_{a,s}:S^1\to S^1$ продолжаются 
как мёбиусовы преобразования сферы Римана $\oc=\mathbb{CP}^1$ 
 и совпадают с 
преобразованием монодромии соответствующих уравнений Риккати 
при обходе вокруг точки $0\in\oc$, в силу предложения  \ref{prop-change}. 
Обозначим  
через $M_{a,s}$ оператор монодромии линейного уравнения (\ref{lin}). По 
определению, справедливо следующее 
\begin{proposition} \label{project} Продолженные 
преобразования потока $h_{a,s}$ совпадают с проективизациями 
операторов монодромии  $M_{a,s}$ соответствующих линейных уравнений 
(\ref{lin}). 
\end{proposition}

Росток каждого из линейных уравнений (\ref{lin}) в точке 0 иррегулярен и 
нерезонансен, точнее, имеет вид (\ref{linirr}). Его формальная нормальная 
форма имеет вид 
\begin{equation}\begin{cases} & \dot w_1=0 \\
& \dot w_2=\frac1{\tau^2}(\frac{is}2+a\tau)w_2\end{cases}.\label{norm-jos}\end{equation}
Оператор монодромии соответствующей формальной нормальной формы будем обозначать через $M_{N,a,s}$. Имеем: 
\begin{equation}M_{N,a,s}=\diag(1,e^{2\pi ia}).\label{monfjos}\end{equation}

\begin{proposition} \label{jos-mon-id} 
Точка $(a,s)$ отвечает перемычке, если и только если $s\neq0$ и 
соответствующий оператор монодромии $M_{a,s}$ проективно тождественен, т.е. 
есть произведение тождественного оператора на скалярный множитель.
\end{proposition}
\begin{proof} Точка отвечает перемычке, если и только если $s\neq0$ и  соответствующее преобразование потока уравнения класса Д 
за период тождественно, в силу предложения \ref{perem-id}. Это вместе с  
предложением \ref{project} доказывает предложение \ref{jos-mon-id}. 
\end{proof}
\begin{lemma} \label{anal-equiv} 
 Точка $(a,s)$, отвечает перемычке, если  и только если $a\in\zz$, 
$s\neq0$ и 
росток в нуле соответствующего линейного уравнения (\ref{lin}) аналитически 
эквивалентен его формальной нормальной форме (\ref{norm-jos}). 
\end{lemma}
\begin{proof} Пусть $s\neq0$, $a\in\mathbb Z$ и уравнение  (\ref{lin}) аналитически 
эквивалентно своей нормальной форме. Тогда точка $(a,s)$ отвечает 
перемычке, в силу 
предложения \ref{jos-mon-id} и тождественности монодромии (\ref{monfjos}). 
Докажем, что верно и обратное. Пусть точка $(a,s)$, $s\neq0$, отвечает перемычке.  Тогда оператор $M_{a,s}$ проективно тождественен, 
а значит, его матрица в любом базисе  должна иметь 
тривиальные верхне- и нижнетреугольные элементы. С другой стороны, 
его матрица в каноническом базисе пространства решений в секторе $S_0$, см. 
рис. \ref{fig:1}, есть произведение диагональной матрицы монодромии 
(\ref{monfjos}) формальной нормальной формы и двух матриц, обратным к 
матрицам Стокса. Последние две матрицы унипотентны, одна из них 
нижнетреугольна, а другая --- верхнетреугольна. Произведение трех 
вышеуказанных матриц имеет тривиальные треугольные элементы, если и 
только если две последние треугольные матрицы имеют тривиальные 
треугольные элементы, и тем самым, тождественны в силу унипотентности. 
Это в точности означает, что матрицы Стокса тождественны, и тем самым, 
линейное уравнение (\ref{lin}) аналитически эквивалентно своей формальной 
нормальной форме, по теореме  \ref{th-aneq}. В частности, монодромия 
уравнения (\ref{lin}) задается матрицей (\ref{monfjos}), которая проективно 
тождественна, если и только если $a\in\zz$. Итак, мы показали, что 
оператор монодромии уравнения (\ref{lin}) проективно тождественен, если и 
только если уравнение аналитически эквивалентно своей формальной 
нормальной форме и $a\in\zz$. Это вместе с предложением \ref{jos-mon-id} 
доказывает лемму \ref{anal-equiv}. Целочисленность абсцисс  перемычек 
доказана.
\end{proof}

\subsection{Случай $|\nu|\leq1$: перемычки $r$-го языка лежат на  прямой $a=r$}

Итак, мы показали, что все перемычки лежат на вертикальных прямых 
с целочисленными абсциссами. Теперь покажем, что при 
$|\nu|\leq1$ перемычки каждого 
индивидуального языка Арнольда лежат на одной и той же целочисленной 
вертикальной прямой, точнее,  
\begin{equation}\rho(a,s)=a \text{ для каждой перемычки } (a,s).\label{odna-pr}
\end{equation} 
\def\calc{\mathcal C}
Итак,  ниже считаем, что $|\nu|\leq1$. 
\begin{proposition} \label{nerav} При $|\nu|\leq1$ для любых $a,s\in\rr$ имеет место неравенство 
$a-1\leq\rho(a,s)\leq a+1$. Оба неравенства являются строгими, за исключением 
случая, когда $s=0$, $a=\pm1$. 
\end{proposition}
Неравенство из предложения \ref{nerav} содержится в статье \cite{buch1} 
и сформулировано там как лемма 4. Приведенное ниже доказательство 
строгости неравенства 
повторяет доказательство из статьи \cite{buch1} с 
небольшим дополнением.  

\begin{proof}  Всякое  решение $x(t)$ уравнения класса Д удовлетворяет 
дифференциальному неравенству 
\begin{equation} a+s\sin t-1\leq \dot x=\sin x+a+s\sin t\leq a+s\sin t+1.\label{ner-vo}
\end{equation}
В случае, когда $\sin x\not\equiv\pm1$, это неравенство является строгим при  почти всех $t$: при всех $t$, для которых $\sin x(t)\neq\pm1$.  
Значит, приращение решения $x(t)$ на 
любом отрезке $I$ длины $2\pi$ находится  между интегралами 
по тому же отрезку левой и правой частей предыдущего неравенства 
(см. теорему Чаплыгина о сравнении \cite{luz}).
Последние интегралы равны, соответственно, $a-1$ и $a+1$. Отсюда следует 
неравенство из предложения. Пусть теперь 
$\sin x(t)\not\equiv1$.  Тогда неравенство (\ref{ner-vo}) является 
строгим при почти всех $t$. Отсюда следует, что  предыдущее приращение 
находится строго между числами $a\pm1$ и равномерно отделено от них 
по всем отрезкам $I$ длины $2\pi$. Тем самым, число вращения также 
лежит строго между числами $a\pm1$, по определению и в силу 
равномерности. Пусть теперь неравенство (\ref{ner-vo}) не является строгим. 
Тогда в силу предыдущего, $\sin x(t)\equiv\pm1$, а значит, 
$x(t)\equiv\frac{\pi}2+\pi k$, $k\in\zz$. Уравнение (\ref{jos}) класса Д имеет 
постоянное решение $x(t)$ c $\sin x(t)\equiv\pm1$, если и только если $s=0$, 
$a=\mp1$: в этом случае число вращения равно нулю. 
Предложение доказано. 
\end{proof}

Рассмотрим язык Арнольда, отвечающий данному 
целому числу вращения $r$. Его дополнение до горизонтальной оси $s=0$ 
лежит строго между прямыми $a=r\pm1$, в силу предыдущего предложения. 
Его перемычки не лежат на горизонтальной оси (по определению) 
и имеют  целочисленные абсциссы, как показано выше. Значит, они лежат на 
прямой $a=r$. Первое утверждение теоремы \ref{th} доказано. 

\subsection{Случай произвольного $\nu$: модуль абсциссы перемычки не превосходит модуля числа вращения}

Здесь доказывается второе утверждение теоремы \ref{th}: при любом фиксированном $\nu\neq0$ абсцисса каждой перемычки имеет тот же знак, что 
и соответствующее число вращения, и ее модуль не превосходит модуля 
числа вращения.

Вначале мы докажем совпадение знаков, которое легко следует из того, 
что вся ось $s$ лежит в нулевом языке Арнольда (следующее предложение). Затем мы докажем вышесформулированное неравенство, используя комплексные 
уравнения Риккати и принцип аргумента для их канонических решений.

\begin{proposition} \label{propa0} При $a=0$ и любых $\nu$ и $s$ уравнение (\ref{jos}) 
имеет нулевое число вращения. Тем самым, при любом $\nu\neq0$ вся ось 
$s$ лежит в нулевом языке Арнольда. 
\end{proposition} 

\begin{proof} При $a=0$ уравнение (\ref{jos}) имеет симметрию 
$(x,t)\mapsto(-x,t+\pi)$. 

Тем самым, если $x(t)$ --- решение, то и  $\wt x(t)=-x(t+\pi)$ --- тоже решение. 
Отсюда следует, что инволюция $x\mapsto -x$ сопрягает 
отображения потока $H_{0,2\pi}:\rr\times\{0\}\to\rr\times \{0\}$  и 
$H_{\pi,2\pi}:\rr\times\{\pi\}\to\rr\times\{\pi\}$  поднятого уравнения (\ref{jos}). Тем самым, рассматриваемые  отображения  имеют противоположные числа вращения, 
определенные как пределы  (\ref{rot}). С другой стороны, их числа вращения 
равны числу вращения потока, см. раздел 2.1. Итак, 
рассматриваемые числа вращения одновременно равны и противоположны. 
 Следовательно, число вращения равно нулю. Предложение доказано.
\end{proof}

\begin{corollary} \label{cpropa0}
Абсцисса  каждой перемычки имеет тот же знак, что и 
соответствующее число вращения, если последнее не равно нулю. 
\end{corollary}

\begin{proof} Число вращения равно нулю при $a=0$ и 
не убывает как функция от $a$. Это доказывает следствие.
\end{proof}

\def\tt{\mathbb T}

Фиксируем $\nu\neq0$ и рассмотрим произвольную перемычку $(a,s)$.  Пусть 
$\rho=\rho(a,s)$ --- соответствующее число вращения. Напомним, 
что соответствующее линейное уравнение (\ref{lin}) аналитически эквивалентно 
своей формальной нормальной форме в нуле (лемма \ref{anal-equiv}). Отсюда 
следует, что проективизированные 
канонические базисные решения уравнения (\ref{lin}) 
голоморфны в полной окрестности 
нуля (предложение \ref{canon-proj}), а следовательно, на всей прямой $\cc$. 
Обозначим последние проективизированные решения 
через $\psi_1(\tau)$, $\psi_2(\tau)$, так что $\psi_1(0)=0$, $\psi_2(0)=\infty$, см. 
предложение \ref{canon-proj}. Положим 
$$S^1_p=\{ |p|=1\}\subset\oc, \ S^1_{\tau}=\{|\tau|=1\}\subset\oc, \ \tt^2=S^1_p\times S^1_{\tau}, \ D_1=\{|\tau|<1\}.$$
\begin{remark} \label{psineq} Тор $\tt^2$ инвариантен относительно уравнения Риккати (\ref{ricc}), ограниченного на $\oc\times S^1_{\tau}$. 
Функции $\psi_1$, $\psi_2$ суть его различные решения, 
а значит, $\psi_1(\tau)\neq\psi_2(\tau)$ 
при всех $\tau\in\cc$. Ограничение каждой из них 
на единичную окружность $S^1_{\tau}$ принимает значения, лежащие 
либо все по одну сторону от единичной окружности $S^1_p$,  либо на самой 
окружности. Это следует из инвариантности тора $\tt^2$.
\end{remark}

Доказательство неравенства $|a|\leq|\rho|$ для перемычек 
из теоремы \ref{th} основано на 
следующей  формуле для числа вращения $\rho$.

\begin{lemma} \label{prop-rho-form}
 Пусть $|\psi_1||_{S^1_{\tau}}\leq1$. Тогда 
\begin{equation}\rho=a-2\#(\text{полюсов функции } \psi_1|_{D_1}).\label{rho-pol}
\end{equation}
\end{lemma}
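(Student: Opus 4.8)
The plan is to realize $\rho$ as the argument increment of the coordinate that diagonalizes the holonomy along $S^1_\tau$, and then to evaluate that increment by the residue theorem inside $D_1$. Write the Riccati equation \eqref{ricc} as $\dot p=\gamma(\tau)p^2+\beta(\tau)p+\alpha(\tau)$ with leading quadratic coefficient $\gamma(\tau)=-\tfrac{i\nu}{2\tau}$. Its two canonical eigensolutions $\psi_1,\psi_2$ (Proposition~\ref{canon-proj}, with $\psi_1(0)=0$, $\psi_2(0)=\infty$) are the two fixed sections of the flow, and for a generic solution $p(\tau)$ the cross-ratio coordinate $g=\frac{p-\psi_1}{p-\psi_2}$ satisfies the \emph{linear} scalar equation $\dot g=\gamma(\psi_1-\psi_2)\,g$, whose coefficient depends on $\tau$ alone. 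Passing to time $t$ via $\tau=e^{it}$, the holonomy $h_{a,s}$ becomes multiplication of $g$ by the Floquet multiplier $\mu=\exp\oint_{S^1_\tau}\gamma(\psi_1-\psi_2)\,dt$.

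First I would check that $\mu$ has unit modulus and that its argument equals $2\pi\rho$ exactly, not merely $2\pi\rho$ modulo $2\pi$. This is where the real structure enters: by Remark~\ref{psineq} the circle $S^1_p$ is invariant, which forces the reality symmetry $\psi_2(\tau)=1/\overline{\psi_1(\tau)}$ on $S^1_\tau$; hence the fixed points $\psi_1(1),\psi_2(1)$ of $h_{a,s}$ are mirror-symmetric across $S^1_p$, the Möbius map sending them to $0,\infty$ carries $S^1_p$ to a circle centred at the origin, and $g\mapsto\mu g$ is a genuine rotation of that circle. Since $g(t)$ is continuous in $t$, the quantity $\mathrm{Im}\oint_{S^1_\tau}\gamma(\psi_1-\psi_2)\,dt$ is the total lifted angle, so $\rho=\tfrac1{2\pi}\mathrm{Im}\oint_{S^1_\tau}\gamma(\psi_1-\psi_2)\,dt$. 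The hypothesis $|\psi_1|\le1$ on $S^1_\tau$ (equivalently $|\psi_2|\ge1$) guarantees $\psi_1\neq\psi_2$ on the contour and no pole on it, so the integral is well defined.

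Next I would evaluate the integral by residues. Since $\psi_1,\psi_2$ are meromorphic on $\overline{D_1}$ and, by the previous step, have no poles on $S^1_\tau$,
\[
\rho=-\frac{\nu}{2}\sum_{\zeta\in D_1}\operatorname{Res}_{\zeta}\frac{\psi_1(\tau)-\psi_2(\tau)}{\tau^2},
\]
and one checks a posteriori that this sum is real. There are two kinds of contributions. At the irregular point $\tau=0$ the local normal form \eqref{norm-jos}, in which $w_2/w_1\sim\tau^{a}e^{-is/2\tau}$ and the monodromy is $\diag(1,e^{2\pi ia})$ \eqref{monfjos}, shows that the leading exponent of the eigensolutions at $0$ is $a$; a Laurent expansion then identifies the residue at $0$ with the base value $a$ (the essential factor $e^{\mp is/2\tau}$ contributes nothing over a full loop, since $\mathrm{Im}(-is/2\tau)$ returns to its initial value). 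At an interior pole $\tau_0$ of $\psi_1$ the Riccati equation forces the principal part $\psi_1\sim \dfrac{-2i\tau_0}{\nu(\tau-\tau_0)}$; combined through the reality symmetry with the mirror contribution of $\psi_2$, each such pole of $\psi_1$ in $D_1$ contributes exactly $-2$. Collecting the two types yields $\rho=a-2\,\#(\text{poles of }\psi_1|_{D_1})$, which is \eqref{rho-pol}.

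The main obstacle is precisely this last bookkeeping. Taken singly the interior residues are complex and depend on the location $\tau_0$, and only their symmetric combination is real; so the reality symmetry must be used not only on $S^1_\tau$ but as the global relation $\psi_2(\tau)=1/\overline{\psi_1(1/\bar\tau)}$, in order to pair each interior pole of $\psi_1$ with the appropriate contribution and to decide, via $|\psi_1|\le1$, which of the two eigensolutions has its poles inside $D_1$. Carrying out this pairing so that each interior pole of $\psi_1$ nets exactly $-2$, and confirming that the residue at $\tau=0$ is exactly $a$ despite the essential singularity, is the crux; once the factor $-2$ per interior pole is established the formula follows.
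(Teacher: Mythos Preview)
Your linearization $g=(p-\psi_1)/(p-\psi_2)$ and the computation $\dot g=\gamma(\psi_1-\psi_2)g$ are fine, but the identification $\rho=\tfrac1{2\pi}\mathrm{Im}\oint\gamma(\psi_1-\psi_2)\,dt$ is where the argument breaks. The map $p\mapsto g$ is a \emph{time-dependent} M\"obius conjugacy; even though it is $2\pi$-periodic, it can shift the rotation number by an integer, and that integer is exactly the winding of $\psi_2$ along $S^1_\tau$. Indeed, with $|\psi_1|<1<|\psi_2|$ on $S^1_\tau$ one has $p-\psi_1=p(1-\psi_1/p)$ with $|\psi_1/p|<1$, so $\arg(p-\psi_1)$ winds like $\arg p$ (contribution $\rho$), while $p-\psi_2=-\psi_2(1-p/\psi_2)$ winds like $\arg\psi_2$. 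Hence $\mathrm{wind}(g)=\rho-\mathrm{wind}(\psi_2|_{S^1_\tau})$, and the second term is not zero in general: by the argument principle it equals $Z(\psi_2,D_1)-P(\psi_2,D_1)$, and $\psi_2$ certainly has a pole at $0$. So your displayed formula for $\rho$ is off by this quantity before you ever get to residues.

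The residue step then compounds the problem. The symmetry you invoke, $\psi_2(\tau)=1/\overline{\psi_1(1/\bar\tau)}$, relates $\psi_1$ inside $D_1$ to $\psi_2$ \emph{outside} $D_1$; it cannot pair residues that both lie inside the unit disk, so it does not turn the complex contributions $i/\tau_0$ from poles of $\psi_1$ into the integers $-2$ you need. Also, $\psi_1,\psi_2$ are meromorphic at $\tau=0$ (they are quotients of entries of the holomorphic gauge matrix $H$, cf.\ Proposition~\ref{proj-norm}), so there is no essential factor $e^{\mp is/2\tau}$ to worry about---that exponential lives in the fibre coordinate $w_2$, not in $\psi_j=h_{2j}/h_{1j}$.

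The paper avoids all of this by never introducing $\psi_2$ or the cross ratio. It works with $\psi_1$ alone and tracks the argument increment $\chi_r(\psi_1)$ of the \emph{velocity} vector $v(\tau)$ of $\psi_1$ along circles $|\tau|=r$: for small $r$ this is read off from the projectivized normal form and equals $a$; for $r=1$, under $|\psi_1|\le1$, the curve $\psi_1|_{S^1_\tau}$ is a periodic orbit of the holonomy on the closed disk, and its velocity winding equals the rotation number $\rho$. The jump between small $r$ and $r=1$ is computed by noting that $v$, viewed in the affine chart, has the same poles with the same multiplicities as $\psi_1^2$ (since $\tilde v=-v/p^2$ is regular in the chart at infinity), so each simple pole of $\psi_1$ in $D_1$ contributes $-2$ to the argument increment. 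That gives $\rho=a-2\#\{\text{poles of }\psi_1\text{ in }D_1\}$ directly.
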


\begin{proof} Для произвольного решения $\psi$ уравнения Риккати 
(\ref{ricc}) и любого $r>0$, такого что  $\psi|_{\{|\tau|=r\}}\neq\infty$, рассмотрим 
уравнение в вариациях вдоль решения $\psi$. Заметим, что решение 
уравнения в вариациях однозначно определено с точностью до 
мультипликативной константы и голоморфно по $\tau$ в окрестности 
окружности $\{|\tau|=r\}$. Это следует из того, что уравнение Риккати имеет 
тривиальную монодромию, как и соответствующее линейное уравнение 
(предложение \ref{jos-mon-id}). 
Фиксируем произвольное ненулевое решение $v(\tau)$ уравнения в вариациях. 
 Ориентируем окружность $\{|\tau|=r\}$ 
против часовой стрелки и введем индекс 
$$\chi_r(\psi)=\text{ приращение аргумента решения } v(\tau) 
\text{ вдоль окружности }
\{|\tau|=r\}.$$

\begin{proposition} \label{prop-rho0}
 Для любого достаточно малого $r>0$  \ $\chi_r(\psi_1)=a$. 
\end{proposition}

Справедливость предложения следует из его справедливости для уравнения 
Риккати, являющегося проективизацией формальной нормальной формы 
(\ref{norm-jos}), 
и аналитической нормализуемости рассматриваемого линейного уравнения 
(\ref{lin}). 

\begin{proposition} \label{prop-rho1}
Пусть $|\psi_1||_{S^1_{\tau}}\leq1$. Тогда $\chi_1(\psi_1)=\rho$. 
\end{proposition}

\begin{proof} Фиксируем $\tau_0\in S^1_{\tau}$. Преобразования 
потока $\oc\times\{\tau_0\}\to\oc\times\{\tau\}$ уравнения Риккати вдоль единичной окружности $|\tau|=1$  суть мёбиусовы преобразования, сохраняющие 
единичный диск. Индекс $\chi_1(\psi)$ любого решения уравнения 
Риккати с $\psi(\tau_0)\in \overline{D_1}$ не зависит от выбора решения и равен 
$\rho$ при $\psi(\tau_0)\in S^1_p$. Это доказывает предложение.
\end{proof}

\begin{proposition} \label{ind-pol}
 Рассмотрим произвольное уравнение Риккати. Пусть 
$0<r_1<r_2$ --- такие, что уравнение Риккати голоморфно 
в окрестности замкнутого  кольца 
$A_{r_1r_2}=\{ r_1\leq |\tau|\leq r_2\}$ и имеет там мероморфное 
решение $\psi(\tau)$ без полюсов на границе кольца. Тогда   
$$\chi_{r_2}(\psi)=\chi_{r_1}(\psi)-
2\#(\text{полюсов функции } \psi|_{r_1<|\tau|<r_2}).$$
\end{proposition}

\begin{proof} 
Ненулевое решение $v(\tau)$ уравнения в вариациях вдоль $\psi$ 
есть функция 
со  значениями в  касательном расслоении к сфере Римана, 
голоморфная в окрестности  кольца $A_{r_1r_2}$ и 
не обращающаяся в нуль.  Пусть $p$ --- 
комплексная координата на $\cc$. Она задает стандартную тривиализацию 
касательного расслоения $T\oc$ над $\cc$. Функция $v(\tau)$ со значениями в 
$T\oc$, записанная в стандартной тривиализации, мероморфна, и мы будем ее 
обозначать тем же символом $v(\tau)$. Покажем, что функция 
$v(\tau)$ имеет те же полюса и той же кратности, что и функция $\psi^2(\tau)$. 
Это вместе с принципом аргумента докажет предложение \ref{ind-pol}.  
Пусть $\tau'\in Int(A_{r_1r_2})$ --- полюс функции $\psi$. 
Пусть $V\subset\oc\setminus0$ --- произвольная малая окрестность бесконечности, 
точнее, дополнение к большому диску с центром в нуле, а   
$D\subset A_{r_1r_2}$ --- малый замкнутый диск с центром в $\tau'$, такой что 
$\psi(D)\subset V$. Рассмотрим вспомагательную тривиализацию касательного 
расслоения $T\oc$ над $V$, заданную с помошью карты $\wt p=\frac1p$. Напомним, 
что  $T\oc$- значное решение уравнения в вариациях голоморфно на замкнутом диске $D$ и в новой тривиализации записывается как 
$\wt v(\tau)=\psi^{-2}(\tau)v(\tau)$. По построению, функция $\wt v(\tau)$  
голоморфна и не обращается в нуль на диске $D$, и $v(\tau)=\wt v(\tau)\psi^2(\tau)$. 
 Следовательно, функция $v(\tau)$ имеет те же полюса и той же кратности, 
 что и функция $\psi^2(\tau)$.  Предложение \ref{ind-pol} доказано.
\end{proof}

Справедливость леммы \ref{prop-rho-form} вытекает из 
предложений \ref{prop-rho0}, \ref{prop-rho1} и \ref{ind-pol}.
\end{proof}

\begin{lemma} \label{lem-razn} 
Пусть функции $\psi_1|_{S^1_{\tau}}$ и $\psi_2|_{S^1_{\tau}}$ 
либо принимают значения,  лежащие по разные стороны от единичной 
окружности $S^1_p$, либо $\psi_j(S^1_{\tau})\subset S_p^1$ при 
некотором $j=1,2$. Тогда число вращения рассматриваемого уравнения класса 
Д равно абсциссе перемычки: $\rho=a$. 
\end{lemma}
Ниже мы выведем лемму \ref{lem-razn} из леммы \ref{prop-rho-form} и 
следующего предложения.

\begin{proposition} \label{prop-razn} В условиях леммы \ref{lem-razn} 
\begin{equation}
|\psi_1||_{\overline{D_1}}\leq 1, \ |\psi_2||_{\overline{D_1}}\geq1.\label{psi1}
\end{equation}
\end{proposition}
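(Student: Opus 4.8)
My plan is to control $\psi_1,\psi_2$ first on the boundary circle $S^1_\tau$ and then to fill in the disc $\overline{D_1}$ by the maximum principle, after excluding poles of $\psi_1$ and zeros of $\psi_2$ in $D_1$. For the boundary, fix $\tau_0\in S^1_\tau$ and let $g$ be the M\"obius transformation of $\oc$ given by the time-$2\pi$ holonomy of the Riccati equation \eqref{ricc} along $S^1_\tau$ based at $\tau_0$. By Proposition \ref{project} the restriction $g|_{S^1_p}$ is the circle diffeomorphism $h_{a,s}$, so $g$ preserves $S^1_p$ together with each of the two discs $\{|p|<1\}$ and $\{|p|>1\}$ (it is isotopic to the identity); thus $g$ is an automorphism of the disc $\{|p|<1\}$. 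Since $\psi_1,\psi_2$ are single-valued solutions of \eqref{ricc}, the points $\psi_1(\tau_0),\psi_2(\tau_0)$ are exactly the two fixed points of $g$, and they are distinct by Remark \ref{psineq}. Hence, excluding the trivial case $g=\mathrm{id}$ (the analytically linearizable case already treated in Lemma \ref{anal-equiv}), $g$ is either elliptic, with one fixed point in $\{|p|<1\}$ and the other in $\{|p|>1\}$, or hyperbolic, with both fixed points on $S^1_p$; the parabolic case is impossible because it would force $\psi_1(\tau_0)=\psi_2(\tau_0)$. These two cases are precisely the two alternatives in the hypothesis of Lemma \ref{lem-razn}, and in both of them one of $\psi_1,\psi_2$ lies in $\{|p|\le1\}$ and the other in $\{|p|\ge1\}$ along $S^1_\tau$.

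Next I would show that the branch normalized by $\psi_1(0)=0$ is the one with $|\psi_1|\le1$ on $S^1_\tau$ and, moreover, that $\psi_1$ has no poles and $\psi_2$ no zeros in $D_1$. For this I reuse the index computation assembled for Lemma \ref{prop-rho-form}. Assuming $|\psi_1|_{S^1_\tau}\le1$, Propositions \ref{prop-rho0}, \ref{prop-rho1}, \ref{ind-pol} give $\chi_\varepsilon(\psi_1)=a$ for small $\varepsilon>0$ and $\chi_1(\psi_1)=\rho$, hence
\[
\#(\text{poles of }\psi_1 \text{ in } D_1)=\tfrac12(a-\rho)\ge0 .
\]
Applying the same machinery to $\eta=1/\psi_2$, the canonical solution vanishing at the origin of the system obtained from \eqref{lin} by the inversion $p\mapsto1/p$ — whose normal form is \eqref{norm-jos} read with exponent $-a$ and whose projected circle dynamics has rotation number $-\rho$ — I obtain
\[
\#(\text{zeros of }\psi_2 \text{ in } D_1)=\#(\text{poles of }\eta \text{ in } D_1)=\tfrac12(\rho-a)\ge0 .
\]
The two nonnegativity conditions pinch $\rho=a$ and make both counts vanish: $\psi_1$ is holomorphic on $D_1$ and $\psi_2$ is nowhere zero there.

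It then remains to apply the maximum modulus principle. The function $\psi_1$ is holomorphic on $D_1$, continuous on $\overline{D_1}$, and satisfies $|\psi_1|\le1$ on $S^1_\tau$, so $|\psi_1|\le1$ on all of $\overline{D_1}$. Likewise $1/\psi_2$ is holomorphic on $D_1$ with $|1/\psi_2|\le1$ on $S^1_\tau$, so $|\psi_2|\ge1$ on $\overline{D_1}$. This is exactly the assertion of Proposition \ref{prop-razn}.

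The step I expect to be the main obstacle is the orientation (sign) bookkeeping underlying the second paragraph: namely, that the solution vanishing at the origin is the inner branch, equivalently that the index computation selects $\rho=a$ rather than its mirror $\rho=-a$. Both choices are internally consistent with the pole/zero counts, so the selection must come from the precise sign $\chi_\varepsilon(\psi_1)=+a$ in Proposition \ref{prop-rho0}, which in turn is tied to the normalization $\psi_1(0)=0$, to the eigenvalue ordering $\la_2-\la_1\in i\rr_+$ fixing which canonical solution is $\psi_1$, and to the orientation of $S^1_p$ induced by $p=e^{ix}$. Once these signs are nailed down and the index machinery is carried faithfully through the inversion $p\mapsto1/p$ for $\psi_2$, the argument closes.
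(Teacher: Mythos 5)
Your argument has a genuine gap, and it is exactly the one you flag in your last paragraph: everything in your second paragraph is conditional on the assumption $|\psi_1||_{S^1_{\tau}}\leq1$, and your second count additionally needs $|\psi_2||_{S^1_{\tau}}\geq1$. Neither is part of the hypothesis of Lemma \ref{lem-razn}, which is symmetric in $\psi_1$ and $\psi_2$: it says only that the two curves lie on opposite sides of $S^1_p$ (or that one of them lies on $S^1_p$), not \emph{which} one is inside. Identifying the inner branch with the solution normalized by $\psi_1(0)=0$ is precisely the nontrivial content of Proposition \ref{prop-razn}, so it cannot be assumed. Moreover, the mechanism you propose for settling it --- the sign in $\chi_{\varepsilon}(\psi_1)=+a$ from Proposition \ref{prop-rho0} --- cannot do the job: the only statement connecting an index to the rotation number, Proposition \ref{prop-rho1}, is itself available only under the hypothesis $|\psi_1||_{S^1_{\tau}}\leq1$. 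Hence in the ``wrong-side'' configuration $|\psi_2|\leq1\leq|\psi_1|$ on $S^1_{\tau}$ neither of your two counts can even be formed (there the curve lying inside the disc --- $\psi_2$ for the original system, $1/\psi_1$ for the inverted one --- is \emph{not} the solution vanishing at the origin, so Proposition \ref{prop-rho0} says nothing about it), and the index machinery yields no contradiction; this is what you yourself concede by saying that both choices are internally consistent with the pole/zero counts. So the proof, as proposed, does not close.

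The missing global ingredient is supplied in the paper by the argument principle, which uses the normalization at the irregular point $\tau=0$ through a zero/pole count over the whole disc rather than through a local index. If $\psi_2$ were the inner branch, the function $\psi_2/\psi_1-1$ would be nowhere zero on $\cc$ (Remark \ref{psineq}), its restriction to $S^1_{\tau}$ would take values in the disc $\{|w+1|\leq1\}$ and hence have winding number $0$ about the origin, yet $\psi_2/\psi_1$ has a pole at $\tau=0$ because $\psi_1(0)=0$, $\psi_2(0)=\infty$; the argument principle then gives $0=\#\{\text{zeros}\}-\#\{\text{poles}\}\leq-1$, a contradiction. Once the wrong side is excluded, the same device applied to $\phi=\psi_1/\psi_2$ shows that $\phi$ has no poles in $D_1$, so $|\phi|\leq1$ on $\overline{D_1}$ by the maximum principle; consequently $\psi_1$ has no poles and $\psi_2$ no zeros in $D_1$, and your third paragraph finishes the proof verbatim. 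This route also spares you the unverified half of your second paragraph: transporting Lemma \ref{prop-rho-form} through the inversion $p\mapsto1/p$, where the normal-form exponent becomes $-a$ and the ordering convention $\la_2-\la_1\in i\rr_+$ is reversed. A smaller inaccuracy: in the setting of the Proposition the point $(a,s)$ lies on the boundary of a phase-lock area, so by Proposition \ref{jos-mon-id} and Lemma \ref{anal-equiv} the monodromy along $S^1_{\tau}$ is the identity; the elliptic/hyperbolic dichotomy for $g$ in your first paragraph is therefore vacuous --- harmless here only because that paragraph merely re-derives the stated hypothesis.
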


\begin{proof} Рассмотрим функцию
$$\phi=\frac{\psi_1}{\psi_2}.$$
Функция $\phi(\tau)$ мероморфна на $\cc$ и обладает следующими свойствами:
\begin{equation} \phi\neq1, \ \phi(0)=0; \ \text{ либо } |\phi||_{S^1_{\tau}}\leq1, 
\ \text{ либо } |\phi||_{S^1_{\tau}}>1.\label{prophi}\end{equation}
Утверждение (\ref{prophi}) следует из условий леммы  \ref{lem-razn} и 
замечания \ref{psineq}. Мы разберем вначале первый случай, когда 
$|\phi||_{S^1_{\tau}}\leq1$, а затем покажем, что второй случай, когда 
$|\phi||_{S^1_{\tau}}>1$, невозможен. 
Рассмотрим разность $\phi(\tau)-1$. Она мероморфна и не обращается 
в нуль на $\cc$, а приращение ее аргумента  вдоль 
окружности $S^1_{\tau}$  равно нулю, так как 
$\phi(S^1_{\tau})\subset\overline{D_1}$. Следовательно, функция $\phi-1$ 
не имеет полюсов в единичном диске. Значит, $|\phi||_{D_1}\leq1$, по 
принципу максимума и так как последнее неравенство выполнено на границе 
диска по предположению. Итак,  $|\psi_1|\leq|\psi_2|$ на единичном диске. 
С другой стороны, на 
единичной окружности выполнено одно из неравенств 
$|\psi_1|\leq 1\leq|\psi_2|$, $|\psi_2|\leq 1\leq|\psi_1|$, в силу условия леммы 
 \ref{lem-razn}.  
Следовательно,  первое неравенство выполнено на всем замкнутом 
единичном диске, и предложение \ref{prop-razn} доказано. Пусть теперь имеет 
место второй случай: $|\phi||_{S^1_{\tau}}>1$, т.е. $|\phi^{-1}||_{S^1_{\tau}}<1$.  
Разность $\phi^{-1}(\tau)-1$ мероморфна и не обращается в нуль на $\cc$, а 
приращение ее аргумента  вдоль окружности $S^1_{\tau}$  равно нулю, 
как и выше. В то же время, рассматриваемая разность имеет полюс в нуле, что 
противоречит принципу аргумента. Полученное противоречие доказывает 
невозможность 
рассматриваемого случая и завершает доказательство предложения 
\ref{prop-razn}. 
\end{proof}

\begin{proof} {\bf леммы \ref{lem-razn}.} Имеем $|\psi_1|_{D_1}\leq1$, 
в силу предложения \ref{prop-razn}.  Отсюда и из леммы \ref{prop-rho-form} 
следует утверждение леммы \ref{lem-razn}.
\end{proof}

\begin{lemma} \label{same-side} Пусть на единичной окружности $S^1_{\tau}$ 
выполнены неравенства $|\psi_1|,|\psi_2|\leq1$. Тогда 
 $\rho\leq a<0$, где 
$a$ --- абсцисса рассматриваемой перемычки. 
\end{lemma}

\begin{proof} Неравенство $\rho\leq a$  следует из леммы \ref{prop-rho-form}. 
Достаточно показать, что $\rho<0$: тогда $a<0$, в силу следствия \ref{cpropa0}.

\begin{proposition} \label{psi's} В условиях леммы \ref{same-side} число 
вращения $\rho$ равно приращению аргумента разности  
$\psi_2-\psi_1$ вдоль единичной окружности. 
\end{proposition}

\begin{proof} Напомним, что $\psi_1(\tau)\neq\psi_2(\tau)$ при всех $\tau\in\cc$. 
Рассмотрим отображения потока $H_{1,\tau}:  
\oc\times\{1\}\to\oc\times\{\tau\}$ уравнения Риккати (\ref{ricc}) 
при $\tau\in S^1_{\tau}$. Преобразования $H_{1,\tau}$ являются 
конформными автоморфизмами единичного диска, и 
$H_{1,\tau}(\psi_j(1),1)=(\psi_j(\tau),\tau)$ при $j=1,2$, так как $\psi_j$ суть решения 
 уравнения (\ref{ricc}). 
Отсюда следует, что геодезическая  метрики Пуанкаре на $D_1$, соединяющая 
точки $\psi_1(1)$ и $\psi_2(1)$, переходит в геодезическую $\gamma_{\tau}$, соединяющую точки 
$\psi_1(\tau)$ и $\psi_2(\tau)$. Каждый из концов геодезической $\gamma_{\tau}$ пробегает 
единичную окружность, делая $\rho$ полных оборотов, когда $\tau$ 
пробегает единичную окружность $S^1_{\tau}$ в положительном направлении. 
Это следует из определения числа вращения. С другой стороны, приращение 
аргумента направляющего единичного 
вектора геодезической $\gamma(\tau)$ в точке $\psi_1(\tau)$ равно приращению 
аргумента прямолинейного вектора на Евклидовой плоскости 
$\rr^2=\cc\subset\oc$, 
 направленного из точки $\psi_1(\tau)$ в точку $\psi_2(\tau)$. Это следует 
 из того, что  угол между 
 рассматриваемыми векторами меньше $\pi$ при всех $\tau$. Итак, приращение 
 аргумента функции $\psi_2(\tau)-\psi_1(\tau)$ равно числу вращения. 
 Предложение \ref{psi's} доказано.
\end{proof}

\begin{proposition} \label{arg-razn} В условиях леммы \ref{same-side} приращение 
аргумента  разности $\psi_2-\psi_1$ вдоль единичной окружности отрицательно.
\end{proposition}
\begin{proof} Рассмотрим матричнозначную функцию $H(\tau)$ нормализующей 
замены, приводящей линейное уравнение (\ref{lin}) к нормальной форме 
(\ref{norm-jos}): $z=H(\tau)w$. Матричнозначная функция $H(\tau)$ голоморфна 
на $\cc$ и принимает значения в обратимых матрицах. 
Напомним, что $\psi_j=\frac{h_{2j}}{h_{1j}}$ при $j=1,2$, в силу 
предложения \ref{proj-norm}. Тем самым, 
$$\psi_2-\psi_1=\frac{h_{11}h_{22}-h_{21}h_{12}}{h_{11}h_{12}}.$$
Числитель в последней правой части есть определитель обратимой 
матричнозначной функции $H$,  а значит, 
он не обращается в нуль на $\cc$. Тем самым,  приращение аргумента 
рассматриваемой разности 
равно минус сумме количеств нулей (с учетом кратностей) функций $h_{11}$ и 
$h_{12}$ в единичном диске. Последняя сумма положительна, так как 
$h_{12}(0)=0$.  
Следовательно, рассматриваемое приращение аргумента отрицательно. 
Предложение \ref{arg-razn} доказано.
\end{proof}

Неравенство $\rho<0$  следует из предложений \ref{psi's} и \ref{arg-razn}. 
Это вместе с рассуждениями из начала доказательства леммы \ref{same-side} 
доказывает лемму  \ref{same-side}. 
\end{proof}

{\bf Окончание доказательства теоремы \ref{th}.} Мы уже доказали, 
что абсциссы перемычек целочисленны. Напомним, что для каждой перемычки 
 каждая из соответствующих 
функций $\psi_j|_{S^1_{\tau}}$, $j=1,2$, принимает значения либо по одну сторону 
от единичной окружности, либо на самой окружности. Имеет место один из следующих трех случаев: 

1) функции $\psi_j|_{S^1_{\tau}}$, $j=1,2$, принимают значения, лежащие 
по разную сторону от единичной окружности; сюда мы включаем и 
случай, когда $|\psi_j||_{S^1_{\tau}}\equiv1$ при некотором $j=1,2$; 

2)  $|\psi_j||_{S^1_{\tau}}<1$ при всех  $j=1,2$;

3)  $|\psi_j||_{S^1_{\tau}}>1$ при всех  $j=1,2$. 

В первом случае имеем $\rho=a$, по лемме \ref{lem-razn}. Во втором случае 
имеем $\rho\leq a<0$, по лемме \ref{same-side}. Третий случай сводится ко 
 второму заменой переменных $(x,t)\mapsto(-x,t+\pi)$ 
 ($(p,\tau)\mapsto (p^{-1},-\tau)$ в координатах $(p,\tau)$). Последняя замена 
  переводит уравнение класса Д в то же уравнение, но меняет знак параметра 
  $a$ и числа вращения. Отсюда и из предыдущего утверждения следует, что 
  в третьем случае имеет место неравенство $0< a\leq\rho$. При 
$a=0$ имеем $\rho=0$, в силу предложения \ref{propa0}. 
Это доказывает теорему \ref{th}.

\subsection{Экспериментальный факт А в общем случае: текущее состояние дел}

Экспериментальный факт А, утверждающий, что  $\rho=a$ для каждой перемычки,  доказан при $|\nu|\leq1$ (теорема \ref{th}). Ожидается, что 
он верен при любом $\nu\neq0$. 

\begin{theorem} \label{final} Для каждой перемычки разность  $\rho-a$ всегда четна. 
Если она не равна нулю, то либо $\rho<a<0$, либо $0<a<\rho$. 
\end{theorem}

Теорема \ref{final} следует  из леммы \ref{prop-rho-form} и теоремы \ref{th}. 

Пусть $(\nu,a,s)$ --- перемычка. Пусть 
$\psi_1$, $\psi_2$ --- голоморфные в нуле решения соответствующего  уравнения Риккати (\ref{ricc}) из предложения \ref{proj-norm}. 

Как показано ниже, равенство $\rho=a$ имеет место при следующем  
условии:

{\bf  Условие $(*)$ на уравнение Риккати (\ref{ricc}):}    либо $\psi_1(\tau)$ не 
имеет полюсов в единичном диске и $|\psi_1||_{S^1_{\tau}}\leq1$, либо 
$\psi_2(\tau)$ не имеет нулей в единичном диске и $|\psi_2||_{S^1_{\tau}}\geq1$.

\begin{lemma} \label{lfinal} Пусть $(\nu,a,s)$ --- перемычка ($s\neq0$).  
Тогда $\rho=a$, если и только если 
соответствующее уравнение Риккати удовлетворяет условию $(*)$.  
\end{lemma} 

Лемма \ref{lfinal} следует из леммы \ref{prop-rho-form}, аналог которой 
справедлив и в случае, когда $|\psi_2||_{S^1_{\tau}}\geq1$, с заменой функции 
$\psi_1$ на $\psi_2$, а  количества полюсов --- на  количество нулей. 

\medskip

{\bf Гипотеза С.} Условие $(*)$ выполнено для всякой перемычки.

\medskip

Экспериментальный факт А эквивалентен гипотезе С, в силу леммы \ref{lfinal}. 

\bigskip

Авторы благодарны В.М.Бухштаберу, Э.Жису, Ю.С.Ильяшенко, А.В.Клименко и 
О.Л.Ромаскевич за полезные обсуждения. 

\end{document}